\theoremstyle{definition}
\newtheorem{corollary}{Corollary}
\newtheorem{proposition}{Proposition}
\begin{document}

\title{A Characteristic Function for the Primes}

\author{Jesse Aaron Zinn\thanks{Assistant Professor of Economics at the College of Business, Clayton State University.}}
\date{May 2, 2016}
\maketitle

\begin{abstract}
I develop a function that, for any integer $n \geq 2$, takes a value of 1 if $n$ is prime, 0 if $n$ is composite. I also discuss two applications: First, the characteristic function provides a new expression for the prime counting function. Second, the components of the characteristic function point to a new expression that gives the number of distinct prime factors for any integer greater than one.

\end{abstract}

\section{Introduction}
There is an often-cited quote from \cite{zagier1977first} that I believe is so relevant to this work as to bear repeating:
\begin{quote}
	There are two facts about the distribution of prime numbers of
	which I hope to convince you so overwhelmingly that they will be
	permanently engraved in your hearts. The first is that, despite their 
	simple definition and role as the building blocks of the natural numbers, the prime numbers belong to the most arbitrary and ornery objects studied by mathematicians: they grow like weeds among the natural numbers, seeming to obey no other law than that of chance, and nobody can predict where the next one will sprout. The second fact is even more astonishing, for it states just the opposite: that the prime numbers exhibit stunning regularity, that there are laws governing their behavior, and that they obey these laws with almost military precision.
\end{quote} 
In this paper I describe some of the ``laws'' that ensure the ``stunning regularity'' of the primes.

The central result is the development of a characteristic function $\chi_{\mathbb{P}}$ for the set of prime numbers $\mathbb{P}$. This function exploits the cyclical nature of the multiples of any given natural number greater than unity, not unlike how the sieve of Eratosthenes yields primes by ``sifting away'' the multiples of known primes.\footnote{See \cite[pp. 114-116]{dunham1990journey} for a short discussion of Eratosthenes and his sieve.}

I also present, in \S \ref{sec:Applications}, two applications of the characteristic function of the primes: an expression for the prime counting function and another expression for the number of distinct prime factors for any give natural number greater than unity.

\section{The Underlying Framework}

The first step is to find a function that ``indicates'' whether a number is a multiple of another. Such a function is supplied in the following proposition.

\begin{proposition}\label{prop:base}
For any integers $n \geq m \geq 2$, define
\[
E_m(n) := 1 - \prod_{j=1}^{m-1}\frac{\sin^2(\frac{n+j}{m}\pi)}{\sin^2(\frac{j}{m}\pi)}.
\]
This function equals 0 if $n$ is divisible by $m$. Otherwise the function equals 1. 
\end{proposition}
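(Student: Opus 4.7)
The plan is a straightforward case analysis on whether $m$ divides $n$, after first observing that the denominator of each factor is nonzero: since $1 \leq j \leq m-1$ forces $j\pi/m \in (0,\pi)$, we have $\sin(j\pi/m) \neq 0$, so each factor (and hence $E_m(n)$) is well-defined.

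For the divisible case, I would suppose $m \mid n$ and write $n = km$ for some integer $k$. Using the identity $\sin(\theta + k\pi) = (-1)^k \sin\theta$, one computes
\[
\sin\!\left(\tfrac{n+j}{m}\pi\right) = \sin\!\left(k\pi + \tfrac{j}{m}\pi\right) = (-1)^k \sin\!\left(\tfrac{j}{m}\pi\right),
\]
so after squaring each factor in the product equals exactly $1$. The full product is therefore $1$, and $E_m(n) = 1 - 1 = 0$.

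For the non-divisible case, I would show that at least one numerator factor vanishes. Let $r \in \{1,2,\ldots,m-1\}$ be the residue of $n$ modulo $m$, and set $j^\star := m - r$. Then $j^\star \in \{1,2,\ldots,m-1\}$ lies in the range of the product, and $n + j^\star \equiv 0 \pmod m$, so $(n+j^\star)/m$ is an integer and $\sin(\tfrac{n+j^\star}{m}\pi) = 0$. Hence the numerator—and thus the whole product—is zero, yielding $E_m(n) = 1 - 0 = 1$.

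This is essentially a routine verification, so I do not anticipate a real obstacle. The only conceptual point worth flagging is that the denominator $\prod_{j=1}^{m-1}\sin^2(j\pi/m)$ plays no structural role beyond normalizing the expression to equal $1$ in the divisible case; the substance of the proposition lies in the two elementary observations above about where $\sin$ vanishes and how it transforms under integer shifts by $\pi$.
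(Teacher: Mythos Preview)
Your proof is correct and follows essentially the same approach as the paper: a case split on divisibility, reducing the divisible case to $\sin^2$ being invariant under integer shifts by $\pi$, and the non-divisible case to one numerator factor vanishing. Your version is slightly more streamlined---you invoke $\sin(\theta+k\pi)=(-1)^k\sin\theta$ directly rather than splitting further into $k$ even versus $k$ odd, and you name the vanishing index $j^\star=m-r$ explicitly---but these are cosmetic improvements on the same argument.
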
  

\begin{proof}
	Suppose $n$ is a multiple of by $m$. Then we can define $c = \frac{n}{m} \in \mathbb{N}$, and we can write
	\[
	E_m(n) = 1 - \prod_{j=1}^{m-1}\frac{\sin^2((c+\frac{j}{m})\pi)}{\sin^2(\frac{j}{m}\pi)}.
	\]
There are two cases to consider: $c$ being either even or odd. If $c$ is even then, since the sine function has period of $2\pi$, we have
\begin{align*}
E_m(n) & = 1 - \prod_{j=1}^{m-1}\frac{\sin^2(\frac{j}{m}\pi)}{\sin^2(\frac{j}{m}\pi)}\\
  & = 1 - 1\\
  & = 0.
\end{align*}
If $c$ is odd then there exists some $q \in \mathbb{N}$ such that $c = 2q + 1$, so
\[
E_m(n) = 1 - \prod_{j=1}^{m-1}\frac{\sin^2((2q + 1+\frac{j}{m})\pi)}{\sin^2(\frac{j}{m}\pi)}
\]
As $\sin(\pi + x) = -\sin x$ for all $x$, it follows that
\[
E_m(n) = 1 - \prod_{j=1}^{m-1} \frac{(-\sin((2q+\frac{j}{m})\pi))^2}{\sin^2(\frac{j}{m}\pi)}
\]
Again use the fact that the sine function has period of $2\pi$ alongside the fact that the square of the negative is the square of the positive:
\begin{align*}
E_m(n) & = 1 - \prod_{j=1}^{m-1}\frac{(-\sin(\frac{j}{m}\pi))^2}{\sin^2(\frac{j}{m}\pi)}\\
& = 1 - 1\\
& = 0.
\end{align*}

Now suppose that $n$ is not a multiple of $m$, then for some $j \in \{1,\ldots,m-1\}$, it must be the case that $n+j$ is a factor of $m$ so one of the $\sin^2(\frac{n+j}{p}\pi)$ terms will be 0, so
\begin{align*}
E_m(n) & = 1 - \prod_{j=1}^{m-1}\frac{\sin^2(\frac{n+j}{m}\pi)}{\sin^2(\frac{j}{m}\pi)}\\
& = 1 - 0\\
& = 1. \qedhere
\end{align*}
\end{proof}

Upon a close reading of the proof for Proposition \ref{prop:base} one can see that $E_m(n)$ will tell whether $n$ is a multiple of $m$ by essentially checking if any of the numbers from $n+1$ to $n+m-1$ are multiples multiples of $m$. If not then $n$ is a multiple of $m$, and if so then $n$ cannot be a multiple of $m$. 

This may seem like an unnecessarily indirect approach. Why not simply check whether $n$ is a multiple of $m$ directly by evaluating $\sin^2(\frac{n}{m}\pi)$? This expression equals zero if $n$ is a multiple of $m$ and some number in $(0,1)$ otherwise. For our purposes, the expression needs to be normalized whenever $n$ is not a multiple of $m$. A problem with such an approach is that the requisite degree of normalization varies from one number to the next. For example, suppose we are checking whether $6$ or $7$ are multiples of $5$. We have $\sin^2(\frac{6}{5}\pi) \approx 0.35$ and $\sin^2(\frac{7}{5}\pi) \approx 0.90$. With $E_m(n)$, we leap this hurdle of variation in normalization by simply multiplying the values of sine functions together and then normalizing with the product of what would have been the individual normalizing factors.

\section{Formulae for Prime Characterization}

If $ab = n$ then either $a \leq \sqrt{n}$ or $b \leq \sqrt{n}$. This fact in conjunction with Proposition \ref{prop:base} implies that if $E_m(n) = 0$ for any integer $m \in [2,\sqrt{n}\,]$ then $n$ is a composite number. We can express this as 
\begin{equation}\label{eq:logic1}
n \textrm{ is composite} \quad \Leftrightarrow \quad \textrm{There exists an integer } m \in [2,\sqrt{n}\,] \textrm{ with } E_m(n) = 0.
\end{equation}
Similarly, if $E_m(n) = 1$ for all integers $m \in [2,\sqrt{n}\,]$ then $n$ has no whole number factors other than 1 and itself, in which case $n$ is prime so\footnote{Some readers may wonder about the cases $n = 2$ and $n = 3$ in which $\sqrt{n} < 2$. In these cases, $[2,\sqrt{n}\,]$ is the empty set, so it is vacuously true that $E_m(n) = 0$ for each integer $m \in [2,\sqrt{n}\,]$. There are no counterexamples in the empty set.}
\begin{equation}\label{eq:logic2}
n \textrm{ is prime} \quad \Leftrightarrow \quad E_m(n) = 1 \textrm{ for all positive integers $m \in [2,\sqrt{n}\,]$.}
\end{equation}
And we have our next result:

\begin{proposition}\label{prop:chi}
For any integer $n \geq 2$,
\begin{equation}\label{eq:chi1}
\begin{aligned}
\chi_{\mathbb{P}}(n) & = \prod_{\substack{m \in [2,\sqrt{n}] \\ m\in \mathbb{N}}} E_m(n)\\
   & = \prod_{\substack{m \in [2,\sqrt{n}] \\ m\in \mathbb{N}}}  \left(1- \prod_{j=1}^{m-1}\frac{\sin^2(\frac{n+j}{m}\pi)}{\sin^2(\frac{j}{m}\pi)} \right).
\end{aligned}
\end{equation}
is a characteristic function of the primes. That is $\chi_{\mathbb{P}}(n) = 1$ if $n$ is prime and $\chi_{\mathbb{P}}(n) = 0$ if $n$ is composite.\footnote{For $n =2$ and $n = 3$, the right-hand side of expression \eqref{eq:chi1} is the empty product. Following the convention that the value of the empty product is one provides the correct results, $\chi_{\mathbb{P}}(2) = \chi_{\mathbb{P}}(3) = 1$.}
\end{proposition}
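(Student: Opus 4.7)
The plan is to observe that the statement follows almost immediately from Proposition \ref{prop:base} together with the logical equivalences \eqref{eq:logic1} and \eqref{eq:logic2}. Since each factor $E_m(n)$ takes only the values $0$ or $1$ by Proposition \ref{prop:base}, the product $\prod_{m \in [2,\sqrt{n}\,] \cap \mathbb{N}} E_m(n)$ equals $1$ precisely when every factor is $1$, and equals $0$ as soon as a single factor vanishes. So it suffices to verify, in each of the two cases (composite versus prime), which alternative occurs.

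For the composite case, I would write $n=ab$ with integers $a,b\geq 2$, and invoke the elementary fact (already stated in the text preceding \eqref{eq:logic1}) that at least one of $a,b$ must be $\leq \sqrt{n}$. Thus there exists some $m \in [2,\sqrt{n}\,] \cap \mathbb{N}$ that divides $n$; Proposition \ref{prop:base} then gives $E_m(n)=0$, which forces $\chi_{\mathbb{P}}(n) = 0$.

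For the prime case, I would note that a prime $n$ has no divisors in $[2,n-1]$, hence in particular none in $[2,\sqrt{n}\,]$. By Proposition \ref{prop:base}, every factor in the product equals $1$, so $\chi_{\mathbb{P}}(n)=1$. The borderline cases $n=2$ and $n=3$, for which $\sqrt{n}<2$ and the index set is empty, are handled by the standard empty-product convention flagged in the footnote.

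I do not foresee a substantive obstacle, since the analytic work was already carried out in establishing Proposition \ref{prop:base}; what remains is the short logical step of aggregating the per-$m$ indicator into a single product and separating the prime and composite cases. The only subtlety worth an explicit sentence is the empty-product convention needed at $n\in\{2,3\}$, which is purely notational.
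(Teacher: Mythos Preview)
Your proposal is correct and follows essentially the same approach as the paper: the paper's proof is a one-line appeal to the equivalences \eqref{eq:logic1} and \eqref{eq:logic2}, and your argument simply unpacks those two implications explicitly (including the empty-product convention for $n\in\{2,3\}$ already noted in the footnote).
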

\begin{proof} 
	This follows directly from expressions \eqref{eq:logic1} and \eqref{eq:logic2}.
\end{proof}
It is possible to lower the number of terms used to calculate $\chi_{\mathbb{P}}(n)$ in expression \eqref{eq:chi1}, particularly if one knows some or all of the primes up to $\sqrt{n}$. Thanks to the fundamental theorem of arithmetic, it is sufficient to include only the terms indexed by prime numbers in the right-hand side of expression \eqref{eq:chi1}, a fact that provides the following corollary.

\begin{corollary}\label{cor:chi}
For any integer $n \geq 2$,
\begin{equation}\label{eq:chi2}
\begin{aligned}
\chi_{\mathbb{P}}(n) & = \prod_{\substack{p \leq \sqrt{n} \\ p\in \mathbb{P}}} E_p(n)\\
  & = \prod_{\substack{p \leq \sqrt{n} \\ p\in \mathbb{P}}} \left(1- \prod_{j=1}^{p-1}\frac{\sin^2(\frac{n+j}{p}\pi)}{\sin^2(\frac{j}{p}\pi)} \right).
\end{aligned}
\end{equation}
\end{corollary}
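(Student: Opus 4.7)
The plan is to verify the two cases (prime and composite) directly, mirroring the logical equivalences \eqref{eq:logic1} and \eqref{eq:logic2} but with the indexing set restricted to primes. First I would observe that if $n$ is prime, then no integer $m \in [2, \sqrt{n}]$ divides $n$, so in particular no prime $p \leq \sqrt{n}$ divides $n$. By Proposition \ref{prop:base}, every factor $E_p(n)$ in the right-hand side of \eqref{eq:chi2} equals $1$, giving a product of $1$ (with the empty product convention handling $n = 2, 3$ exactly as in Proposition \ref{prop:chi}).

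Next I would handle the composite case, which is where the fundamental theorem of arithmetic enters. If $n$ is composite, write $n = ab$ with $2 \leq a \leq b$, so that $a \leq \sqrt{n}$. By the fundamental theorem of arithmetic, $a$ has at least one prime factor $p$, and since $p \mid a \mid n$ with $p \leq a \leq \sqrt{n}$, we obtain a prime $p \leq \sqrt{n}$ that divides $n$. Applying Proposition \ref{prop:base} at this $p$ yields $E_p(n) = 0$, so the corresponding factor in \eqref{eq:chi2} vanishes and the entire product is $0$.

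Combining the two cases shows that the expression in \eqref{eq:chi2} agrees with $\chi_{\mathbb{P}}(n)$ for every integer $n \geq 2$, which is exactly the claim of the corollary. I do not anticipate any serious obstacle: the only substantive content beyond Proposition \ref{prop:chi} is the elementary fact that every composite $n$ admits a prime divisor at most $\sqrt{n}$, and this is immediate from the factorization $n = ab$ together with the fundamental theorem of arithmetic. The empty-product edge cases $n = 2$ and $n = 3$ are handled identically to the original proposition and need only be mentioned.
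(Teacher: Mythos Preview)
Your proposal is correct and follows essentially the same approach as the paper, which does not give a formal proof but simply remarks before the corollary that ``thanks to the fundamental theorem of arithmetic, it is sufficient to include only the terms indexed by prime numbers.'' Your two-case argument is a faithful and more detailed unpacking of exactly this observation, with the key step being that a composite $n$ has a prime divisor $p \leq \sqrt{n}$.
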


\section{Applications}\label{sec:Applications}

Having developed a formula for the characteristic function of the primes $\chi_{\mathbb{P}}(n)$, this section highlights two applications. The first is the prime counting function and the second is another formula for the number of distinct prime factors of any given number.

\subsection{The Prime Counting Function}

A natural extension of an indicator function for the primes is the prime counting function $\pi(x)$, which gives the number of primes less than or equal to $x$. It is well known that $\pi(x)  = \sum_{n = 2}^{x} \chi_{\mathbb{P}}(n)$. Thus, from Proposition \ref{prop:chi}, we have
\begin{equation}\label{eq:counting1}
\pi(x) = \sum_{n = 2}^{x} \left(  \prod_{\substack{m \in [2,\sqrt{n}] \\ m\in \mathbb{N}}}  \left(1- \prod_{j=1}^{m-1}\frac{\sin^2(\frac{n+j}{m}\pi)}{\sin^2(\frac{j}{m}\pi)} \right)  \right). 
\end{equation}
Likewise, Corollary \ref{cor:chi} yields
\begin{equation}\label{eq:counting2}
\pi(x) = \sum_{n = 2}^{x} \left( \prod_{\substack{p \leq \sqrt{n} \\ p\in \mathbb{P}}} \left(1- \prod_{j=1}^{p-1}\frac{\sin^2(\frac{n+j}{p}\pi)}{\sin^2(\frac{j}{p}\pi)} \right) \right). 
\end{equation}

These expressions for the prime counting function have several features that other expressions for the prime counting function do not. In particular, expression \eqref{eq:counting1} allows one to count the primes without having knowledge of any of the primes beforehand. Also, both expressions \eqref{eq:counting1} and \eqref{eq:counting2} contain no non-analytical components, no ceiling functions, floor functions, etc. This is in sharp contrast to the formula of Legendre and the derivatives thereof.\footnote{See, for example, \cite[pp. 10-12]{riesel2012prime}.}

\subsection{The Number of Distinct Prime Factors}
A function of some interest in number theory is $\omega(n)$, which gives the number of distinct prime factors of any integer $n \geq 2$.\footnote{A discussion of $\omega(n)$ is contained in \cite[p. 354]{hardy1979introduction}.} The following proposition provides an analytic extression for $\omega(n)$.

\begin{proposition} For any natural number $n = p_1^{\alpha_1} p_2^{\alpha_2} \cdots p_{\omega(n)}^{\alpha_{\omega(n)}}$, where $p_k \in \mathbb{P}$ and $\alpha_k \in \mathbb{N}$ for each $k$, the following is true:
	\[
	\begin{aligned}
	\omega(n) & = \sum_{k=1}^{\omega(n)} (1 - E_{p_k}(n))\\
	 & = \sum_{\substack{p\leq n \\ p\in \mathbb{P}}} \left( \prod_{j=1}^{p-1}\frac{\sin^2(\frac{n+j}{p}\pi)}{\sin^2(\frac{j}{p}\pi)} \right).
	\end{aligned}
	\]
\end{proposition}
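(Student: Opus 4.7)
The proof plan is to reduce both equalities to Proposition~\ref{prop:base} and the definition of $E_m(n)$. Since the heavy lifting was already done in proving that $E_m(n)$ is a clean $0/1$ indicator of divisibility, everything in this proposition should fall out by bookkeeping.

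For the first equality, I would argue as follows. By Proposition~\ref{prop:base}, $E_m(n) = 0$ precisely when $m \mid n$ and $E_m(n) = 1$ otherwise. In particular, each $p_k$ in the prime factorization of $n$ divides $n$, so $E_{p_k}(n) = 0$ and hence $1 - E_{p_k}(n) = 1$. Summing over $k = 1, \ldots, \omega(n)$ then gives $\omega(n)$ on the nose. This step is essentially a definition chase.

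For the second equality, I would first observe that directly from the definition of $E_p$,
\[
1 - E_p(n) \;=\; \prod_{j=1}^{p-1} \frac{\sin^2\!\bigl(\tfrac{n+j}{p}\pi\bigr)}{\sin^2\!\bigl(\tfrac{j}{p}\pi\bigr)},
\]
so the summand on the right-hand side of the proposition is exactly $1 - E_p(n)$. Then I would rewrite the sum $\sum_{p \leq n,\, p \in \mathbb{P}}(1 - E_p(n))$ by partitioning the range: for primes $p$ that divide $n$, $1 - E_p(n) = 1$, while for primes $p \leq n$ that do not divide $n$, $1 - E_p(n) = 0$. Since every prime factor of $n$ satisfies $p_k \leq n$, the surviving terms are exactly $p_1, \ldots, p_{\omega(n)}$, each contributing $1$. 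Hence the sum equals $\omega(n)$, which also matches the first equality after substitution.

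There is essentially no main obstacle — the only point that merits a brief remark is that extending the summation range from $\{p_1,\ldots,p_{\omega(n)}\}$ to all primes $p \le n$ is harmless precisely because $E_p(n) = 1$ for the added primes, and that $p \leq n$ is the natural upper bound since $E_p(n)$ requires $p \leq n$ in Proposition~\ref{prop:base} (and in particular covers the case where $n$ itself is prime, so that $p = n$ must be included).
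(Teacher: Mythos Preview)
Your proposal is correct and follows essentially the same approach as the paper: both arguments use Proposition~\ref{prop:base} to see that $1 - E_{p_k}(n) = 1$ for each prime factor, identify the summand as $1 - E_p(n)$ by definition, and then argue (via uniqueness of the prime factorization) that only the primes dividing $n$ contribute to the sum over all $p \le n$. Your version is in fact a bit more explicit than the paper's about why the non-dividing primes contribute zero, but the structure is the same.
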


\begin{proof}
	From Proposition \ref{prop:base}, $1 - E_{p_k}(n) = 1$ for each $k = 1, 2, \ldots \omega(n)$, which implies
	
	\[
	\sum_{k=1}^{\omega(n)} \left(1 - E_{p_k}(n)\right) = \omega(n).	
	\]
	By the Fundamental Theorem of Arithmetic, $n = p_1^{\alpha_1} p_2^{\alpha_2} \cdots p_{\omega(n)}^{\alpha_{\omega(n)}}$ is the unique prime factorization of $n$, so
	 \begin{align*}
	   \sum_{\substack{p\leq n \\ p\in \mathbb{P}}} \left( \prod_{j=1}^{p-1}\frac{\sin^2(\frac{n+j}{p}\pi)}{\sin^2(\frac{j}{p}\pi)} \right)
	    & = \sum_{k=1}^{\omega(n)} \left(\prod_{j=1}^{p_k-1}  \frac{\sin^2(\frac{n+j}{p_k}\pi)}{\sin^2(\frac{j}{p_k}\pi)}\right)\\
	    & = \sum_{k=1}^{\omega(n)} \left(1 - E_{p_k}(n)\right).\\
	    & = \omega(n). \qedhere
	 \end{align*}	 
\end{proof}

\bibliography{C:/bibtex/library}
\end{document}